\newcommand{\vertiii}[1]{{\left\vert\kern-0.25ex\left\vert\kern-0.25ex\left\vert #1
    \right\vert\kern-0.25ex\right\vert\kern-0.25ex\right\vert}}
\def\showauthornotes{1}
\def\showkeys{0}
\def\showdraftbox{0}
\def\usemicrotype{1}
\def\showfixme{0}
\newtheorem{theorem}{Theorem}[section]
\newtheorem*{theorem*}{Theorem}
\newtheorem*{proposition*}{Proposition}
\newtheorem{lemma}[theorem]{Lemma}
\newtheorem*{lemma*}{Lemma}
\newtheorem*{conjecture*}{Conjecture}
\newtheorem*{fact*}{Fact}
\newtheorem*{hypothesis*}{Hypothesis}
\theoremstyle{definition}
\theoremstyle{remark}
\newtheorem*{claim*}{Claim}
\newtheorem{remark}[theorem]{Remark}
\newtheorem*{remark*}{Remark}
\newtheorem*{observation*}{Observation}
\let\mathbb\varmathbb
\newcommand{\savehyperref}[2]{\texorpdfstring{\hyperref[#1]{#2}}{#2}}
\newcommand{\Sref}[1]{\hyperref[#1]{\S\ref*{#1}}}
\newcommand{\Authornote}[2]{{\sffamily\small\color{red}{[#1: #2]}}}
\newcommand{\Authornotecolored}[3]{{\sffamily\small\color{#1}{[#2: #3]}}}
\newcommand{\Authorcomment}[2]{{\sffamily\small\color{gray}{[#1: #2]}}}
\newcommand{\Authorstartcomment}[1]{\sffamily\small\color{gray}[#1: }
\newcommand{\Authorfnote}[2]{\footnote{\color{red}{#1: #2}}}
\newcommand{\Authorfixme}[1]{\Authornote{#1}{\textbf{??}}}
\newcommand{\Authormarginmark}[1]{\marginpar{\textcolor{red}{\fbox{\Large #1:!}}}}
\newcommand{\Authornote}[2]{}
\newcommand{\Authornotecolored}[3]{}
\newcommand{\Authorcomment}[2]{}
\newcommand{\Authorstartcomment}[1]{}
\newcommand{\Authorfnote}[2]{}
\newcommand{\Authorfixme}[1]{}
\newcommand{\Authormarginmark}[1]{}
\newcommand{\Esymb}{\mathbb{E}}
\DeclareMathOperator*{\E}{\Esymb}
\newcommand\bdot\bullet
\newcommand{\C}{\mathbb C}
\renewcommand{\leq}{\leqslant}
\renewcommand{\geq}{\geqslant}
\newcommand{\draftbox}{\begin{center}
  \fbox{%
    \begin{minipage}{2in}%
      \begin{center}%
          \Large\textsc{Working Draft}\\%
        Please do not distribute%
      \end{center}%
    \end{minipage}%
  }%
\end{center}
\vspace{0.2cm}}
\newcommand{\draftbox}{}
\let\epsilon=\varepsilon
\numberwithin{equation}{section}
\newcommand\MYcurrentlabel{xxx}
\newcommand{\MYstore}[2]{%
  \global\expandafter \def \csname MYMEMORY #1 \endcsname{#2}%
}
\newcommand{\MYload}[1]{%
  \csname MYMEMORY #1 \endcsname%
}
\newcommand{\MYnewlabel}[1]{%
  \renewcommand\MYcurrentlabel{#1}%
  \MYoldlabel{#1}%
}
\newcommand{\MYdummylabel}[1]{}
\newcommand{\torestate}[1]{%
  \let\MYoldlabel\label%
  \let\label\MYnewlabel%
  #1%
  \MYstore{\MYcurrentlabel}{#1}%
  \let\label\MYoldlabel%
}
\newcommand{\restatetheorem}[1]{%
  \let\MYoldlabel\label
  \let\label\MYdummylabel
  \begin{theorem*}[Restatement of \prettyref{#1}]
    \MYload{#1}
  \end{theorem*}
  \let\label\MYoldlabel
}
\newcommand{\restatelemma}[1]{%
  \let\MYoldlabel\label
  \let\label\MYdummylabel
  \begin{lemma*}[Restatement of \prettyref{#1}]
    \MYload{#1}
  \end{lemma*}
  \let\label\MYoldlabel
}
\newcommand{\restateprop}[1]{%
  \let\MYoldlabel\label
  \let\label\MYdummylabel
  \begin{proposition*}[Restatement of \prettyref{#1}]
    \MYload{#1}
  \end{proposition*}
  \let\label\MYoldlabel
}
\newcommand{\restatefact}[1]{%
  \let\MYoldlabel\label
  \let\label\MYdummylabel
  \begin{fact*}[Restatement of \prettyref{#1}]
    \MYload{#1}
  \end{fact*}
  \let\label\MYoldlabel
}
\newcommand{\restate}[1]{%
  \let\MYoldlabel\label
  \let\label\MYdummylabel
  \MYload{#1}
  \let\label\MYoldlabel
}
\newcommand{\addreferencesection}{
  \phantomsection
  \addcontentsline{toc}{section}{References}
}
\let\origparagraph\paragraph
\renewcommand{\paragraph}[1]{\origparagraph{#1.}}
\let\citet\cite
\theoremstyle{definition}
\DeclareUrlCommand\email{}
\newcommand{\ignore}[1]{}
\definecolor{corlinks}{RGB}{64,128,128}
\definecolor{cormenu}{RGB}{0,37,94}
\definecolor{corurl}{RGB}{0,46,91}
\renewcommand{\int}{\mathsf{int}}
\let\orgdescriptionlabel\descriptionlabel
\renewcommand*{\descriptionlabel}[1]{%
  \let\orglabel\label
  \let\label\@gobble
  \phantomsection
  \edef\@currentlabel{#1}%
  \let\label\orglabel
  \orgdescriptionlabel{#1}%
}
\title{On Concentration Inequalities for Random Matrix Products\thanks{T.K. is supported by NSF Grant CCF-1718695. S.M. and N.S. are supported by NSF Grant CCF-1553751.}}
\author{
Tarun Kathuria \\ UC Berkeley \\
\text{tarunkathuria@berkeley.edu} \and Satyaki Mukerjee \\ UC Berkeley \\ \text{satyaki@berkeley.edu} \and Nikhil Srivastava \\ UC Berkeley \\ \text{nikhil@math.berkeley.edu}
}
\begin{document}

\maketitle
\draftbox
\thispagestyle{empty}

\begin{abstract}
    Consider $n$ complex random matrices $X_1,\ldots,X_n$ of size $d\times d$ sampled i.i.d. from a distribution with mean $\E[X]=\mu$. While the concentration of averages of these matrices is well-studied, the concentration of other functions of such matrices is less clear. One function which arises in the context of stochastic iterative algorithms, like Oja's algorithm for Principal Component Analysis, is the normalized matrix product defined as
    \begin{align*}
        \prod\limits_{i=1}^{n}\left(I + \frac{X_i}{n}\right).
    \end{align*}
    Concentration properties of this normlized matrix product were recently studied by \cite{HW19}. However, their result is suboptimal in terms of the dependence on the dimension of the matrices as well as the number of samples. In this paper, we present a stronger concentration result for such matrix products which is optimal in $n$ and $d$ up to constant factors. Our proof is based on considering a matrix Doob martingale, controlling the quadratic variation of that martingale, and applying the Matrix Freedman inequality of Tropp \cite{TroppIntro15}. 
\end{abstract}

\setcounter{page}{1}
\section{Setup}
Suppose ${X_1},\ldots,{X_n} \in \C^{d\times d}$ are random matrices sampled i.i.d from some distribution with $\E[{X_i}]=\mu$ and $\|{X}_i\|_\mathsf{op}\leq L$ almost surely.
A famous result is the matrix Bernstein inequality \cite{TroppIntro15} for sums of random matrices, which in this setting asserts that 
\begin{align*}
    \mathsf{Pr}\left[\left\|\sum\limits_{i=1}^{n}\frac{X_i}{n}-\mu\right\|_\mathsf{op}\geq t\right] \leq 2d \cdot \exp(-n t^2/2L^2),
\end{align*}
whenever $t \leq L\sqrt{\frac{\log d}{n}}$ and $n\geq \log(d)$. For some numerical linear algebra problems, it is of interest to consider instead of sums, functions of the form $$f({X_1},\ldots, {X_n}) = \prod\limits_{i=1}^{n}\left({I} +\frac{{X}_i}{n}\right).$$ We will refer to such functions as matrix product functions. One can easily prove the following lemma
\begin{lemma}$\E_{{X_1},\ldots,{X_n}}[f(X_1,\ldots,X_n)] \preceq e^\mu$ with equality in the limit as $n\rightarrow \infty$.
\end{lemma}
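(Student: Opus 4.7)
The plan is to compute the expectation exactly using independence and then compare the resulting deterministic expression with $e^\mu$ in the Loewner order via simultaneous diagonalization.

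First, I would factor the expectation. Since the $X_i$ are independent and the product is taken in a fixed order, expanding any entry of a matrix product and using independence term by term gives, for any integrable matrix-valued $Y_1,\ldots,Y_n$,
\[
\E\bigl[(Y_1 Y_2 \cdots Y_n)_{ij}\bigr] = \sum_{k_1,\ldots,k_{n-1}} \E[(Y_1)_{ik_1}]\,\E[(Y_2)_{k_1 k_2}]\cdots \E[(Y_n)_{k_{n-1} j}] = \bigl(\E[Y_1]\cdots \E[Y_n]\bigr)_{ij}.
\]
Applying this with $Y_i = I + X_i/n$, each having common expectation $I+\mu/n$, yields
\[
\E[f(X_1,\ldots,X_n)] = \left(I + \tfrac{\mu}{n}\right)^{\!n}.
\]

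Second, I would observe that $(I+\mu/n)^n$ and $e^\mu$ are both power series in the single matrix $\mu$, so they commute and are simultaneously diagonalizable (with $\mu$ assumed Hermitian, as in the intended PCA-type applications). For each eigenvalue $\lambda$ of $\mu$, the scalar inequality $1+x \leq e^x$ applied at $x=\lambda/n$ gives $(1+\lambda/n)^n \leq e^\lambda$, so every eigenvalue of the commuting difference $e^\mu - (I+\mu/n)^n$ is nonnegative, which is precisely the Loewner inequality $(I+\mu/n)^n \preceq e^\mu$.

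Finally, equality in the limit $n\to\infty$ is the standard matrix exponential identity $\lim_{n\to\infty} (I+\mu/n)^n = e^\mu$: the coefficient $\binom{n}{k}/n^k$ of $\mu^k$ in the binomial expansion converges to $1/k!$ for each fixed $k$, and the tail terms are uniformly bounded by a convergent scalar series in $\|\mu\|_{\mathsf{op}}$, so one can pass to the limit term by term. The argument presents no substantive obstacle; the only point requiring mild care is that the Loewner ordering presumes $\mu$ to be Hermitian, a natural hypothesis in the applications motivating the paper.
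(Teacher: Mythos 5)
Your proposal is correct and follows essentially the same route as the paper: factor the expectation by independence to get $\bigl(I+\tfrac{\mu}{n}\bigr)^n$, compare with $e^\mu$ via the scalar bound $1+x\le e^x$, and note equality in the limit. You simply supply details (entrywise factorization, simultaneous diagonalization, and the Hermitian caveat on the Loewner order) that the paper leaves implicit.
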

\begin{proof}
\begin{align*}
    \E_{{X_1},\ldots,{X_n}}[f({X_1},\ldots,{X_n})] &=  \E_{{X_1},\ldots,{X_n}}\left[\prod\limits_{i=1}^{n}\left(\mathbf{I} +\frac{{X}_i}{n}\right)\right]\\
    &=\prod\limits_{i=1}^{n}\E_{{X_i}}\left[{I}+\frac{{X_i}}{n}\right]\\
    &=\prod\limits_{i=1}^{n}\left[{I}+\frac{\mu}{n}\right]\\
    &=\left({I}+\frac{\mu}{n}\right)^n
    \preceq e^\mu,
\end{align*}
and there is equality in the limit. The second equality is because of independence of ${X_i}.$
\end{proof}
Recently a central limit theorem for matrix products was established \cite{EH18} and the following concentration inequality was proven by Henriksen and Ward \cite{HW19}. 
\begin{theorem}[\cite{HW19}]Assuming $\max\{3,Le^2\}\leq \log(n)+1\leq \big(\frac{16n}{\log(dne/\delta)}\big)^{1/3}$, we have that with probability greater than $1-2\delta$, the following holds
\begin{align*}
    \|f({X_1,\ldots,X_n)}-e^\mu\|\leq \frac{O(Le^L)\log(n)}{\sqrt{n}}\big(\sqrt{\log(d/\delta)+\log(n)^2}+\frac{\log(n)}{\sqrt{n}}\big)+\frac{L^2e^L}{n}.
\end{align*}
\end{theorem}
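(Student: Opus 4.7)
The plan is to set up a matrix Doob martingale. Let $\cF_k := \sigma(X_1,\ldots,X_k)$ be the natural filtration and consider the conditional expectations $M_k := \E[f(X_1,\ldots,X_n) \mid \cF_k]$. By independence of the $X_i$ and $\E[I + X_i/n] = I + \mu/n$, this simplifies to
\[M_k = P_k \cdot \left(I + \tfrac{\mu}{n}\right)^{n-k}, \qquad P_k := \prod_{i=1}^{k}\left(I+\tfrac{X_i}{n}\right),\]
so $M_n = f$ and $M_0 = (I+\mu/n)^n$. The martingale differences $Y_k := M_k - M_{k-1}$ then telescope to $f - (I+\mu/n)^n$ and admit the explicit formula
\[Y_k = P_{k-1} \cdot \frac{X_k-\mu}{n} \cdot \left(I+\tfrac{\mu}{n}\right)^{n-k}.\]

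Next, I would apply the matrix Freedman inequality to $\sum_k Y_k$. This requires an almost-sure bound on $\|Y_k\|_{\mathsf{op}}$ and a bound on the predictable quadratic variations $W^{(+)} := \sum_k \E[Y_k Y_k^* \mid \cF_{k-1}]$ and $W^{(-)} := \sum_k \E[Y_k^* Y_k \mid \cF_{k-1}]$. Using the a.s.\ estimate $\|X_i\|_{\mathsf{op}} \leq L$, one has $\|P_{k-1}\|_{\mathsf{op}} \leq (1+L/n)^{k-1}\leq e^L$ and $\|(I+\mu/n)^{n-k}\|_{\mathsf{op}} \leq e^L$, yielding $\|Y_k\|_{\mathsf{op}} \leq 2Le^{2L}/n$ and $\|W^{(\pm)}\|_{\mathsf{op}} = O(L^2 e^{4L}/n)$. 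Matrix Freedman then delivers a deviation bound of order $Le^{2L}\bigl(\sqrt{\log(d/\delta)/n} + \log(d/\delta)/n\bigr)$, after which the deterministic approximation error $\|(I+\mu/n)^n - e^\mu\|_{\mathsf{op}} = O(L^2 e^L/n)$ accounts for the final additive term in the stated bound.

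The main obstacle, and the likely source of the extra $\log(n)$ factors in the quoted bound, is obtaining a sufficiently tight handle on the quadratic variation $W^{(\pm)}$. The naive estimate above uses the worst-case value $\|P_{k-1}\|_{\mathsf{op}} \leq e^L$, whereas its typical value should be closer to the deterministic quantity $\|(I+\mu/n)^{k-1}\|_{\mathsf{op}}$; realizing this requires knowing that $P_{k-1}$ itself concentrates, which is precisely the statement we are trying to prove. A natural (but lossy) way to close this circular dependence is to stop the martingale at the first index $k$ where $\|P_{k-1}\|_{\mathsf{op}}$ exceeds some threshold, prove the Freedman bound on the stopped process, and then show via a union bound over $k \in [n]$ that the stopping time equals $n$ with high probability; each iteration of such a bootstrap contributes an additional logarithmic factor in $n$, plausibly matching the $\log(n)$ terms appearing in the theorem. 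The improved bound promised in the abstract presumably avoids this bootstrap by analyzing the quadratic variation directly without an a priori norm bound on $P_{k-1}$.
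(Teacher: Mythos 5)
Your proposal does not follow the route behind the quoted statement at all: this theorem is Henriksen--Ward's result, cited here as prior work, and its actual proof partitions the expanded product into groups of $k$-wise products via Baranyai's theorem and applies the matrix Bernstein inequality to each group; the $\log(n)$ factors in the bound are an artifact of that partitioning. What you propose instead --- the Doob martingale $M_k = \E[f \mid X_1,\ldots,X_k]$ with increments $Y_k = P_{k-1}\frac{X_k-\mu}{n}(I+\mu/n)^{n-k}$, controlled by matrix Freedman --- is precisely the approach this paper uses for its own, stronger Theorem~\ref{thm:mainthm}, which removes the $\log(n)$ factors entirely. So as a strategy for establishing (a strengthening of) the stated inequality your plan is sound, but it is a different argument from the one the cited theorem rests on.

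The genuine problem is the obstacle you invent in the last paragraph: there is no circular dependence and no need for a stopping-time bootstrap. The bound $\|P_{k-1}\|_{\mathsf{op}} \le (1+L/n)^{k-1}$ is a deterministic consequence of $\|X_i\|_{\mathsf{op}}\le L$ holding almost surely; it requires no concentration of $P_{k-1}$, and plugging it (together with $\|(I+\mu/n)^{n-k}\|\le (1+L/n)^{n-k}$) into the increment and quadratic-variation bounds already closes the argument. Indeed, since the two factors combine as $(1+L/n)^{k-1}(1+L/n)^{n-k}=(1+L/n)^{n-1}\le e^{L}$, the correct bookkeeping gives $\|Y_k\|\le 2Le^{L}/n$ and $\|W^{(\pm)}\|\le 4L^2e^{2L}/n$ (your $e^{2L}$ and $e^{4L}$ overcount by squaring each factor separately), and Freedman then yields a deviation of order $Le^{L}\sqrt{\log(d/\delta)/n}$ with no $\log(n)$ loss --- strictly better than the stated bound. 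Your speculation that each bootstrap iteration "plausibly matches" the $\log(n)$ terms is therefore both unnecessary and misleading about where those terms come from; they belong to Henriksen--Ward's Bernstein-plus-partitioning technique, not to any intrinsic difficulty of the martingale route. Finally, as written your sketch is not a finished proof of the precise quoted statement (you do not verify its hypotheses or the additive $L^2e^{L}/n$ term beyond an asserted estimate on $\|(I+\mu/n)^n-e^\mu\|$), though a completed martingale argument would subsume it.
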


 Their proof groups the product into sums of $k-$wise products in a careful way, appealing to Baranyai's theorem, and applies matrix Bernstein inequality to each partition. This approach loses a $(\log n)^2$ factor compared to the matrix Bernstein result for sums and it is unclear whether this is necessary. In this note, we will give a simple proof relying on the Matrix Freedman inequality \cite{TroppIntro15} which does not lose the $\log n$ factors, essentially matching the matrix Bernstein inequality for sums of matrices upto constants. 
\begin{theorem}\label{thm:mainthm}\begin{align*}\mathsf{Pr}\left[\left\|f({X}_1,\ldots,{X}_n)-e^\mu\right\|_\mathsf{op}\geq t\right] \leq 2d \cdot \exp(-cn t^2/L^2e^{2L}),\end{align*}
whenever $t\leq Le^L\sqrt{\frac{\log d}{n}}$, for some absolute constant $c$. Equivalently, for every $\delta\in(0,1)$ with probabiity greater than $1-\delta$, we have
\begin{align*}
    \|f({X_1,\ldots,X_n)}-e^\mu\|\leq \frac{O(Le^L)}{\sqrt{n}}\sqrt{\log(d/\delta)}.
\end{align*}
\end{theorem}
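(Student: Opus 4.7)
The plan is to apply Tropp's Matrix Freedman inequality to the Doob martingale of $f(X_1,\ldots,X_n)$ with respect to the filtration $\mathcal{F}_k = \sigma(X_1,\ldots,X_k)$. Writing $P_k := \prod_{i=1}^{k}(I+X_i/n)$ and $Q_k := (I+\mu/n)^{n-k}$, the independence of the $X_i$ gives the explicit form $Y_k := \E[f \mid \mathcal{F}_k] = P_k Q_k$, so telescoping yields the clean martingale difference
\begin{equation*}
D_k \;=\; Y_k - Y_{k-1} \;=\; P_{k-1} \cdot \frac{X_k - \mu}{n} \cdot Q_k.
\end{equation*}
The starting point $Y_0 = (I+\mu/n)^n$ is deterministic and differs from $e^\mu$ by $O(L^2 e^L/n)$ in operator norm (by a direct series expansion applied eigenvalue-by-eigenvalue or via Taylor's theorem), a lower-order term that is absorbed into the target bound at the end.

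Next I would extract the two parameters required by Matrix Freedman: the uniform bound $R$ on $\|D_k\|_{\mathsf{op}}$, and a deterministic bound $w$ on the predictable quadratic variations $\sum_k \E_{k-1}[D_k D_k^*]$ and $\sum_k \E_{k-1}[D_k^* D_k]$. Each factor $(I+X_i/n)$ and $(I+\mu/n)$ has operator norm at most $1 + L/n$, and the crucial observation is the telescoping bound
\begin{equation*}
\|P_{k-1}\|_{\mathsf{op}} \cdot \|Q_k\|_{\mathsf{op}} \;\le\; (1 + L/n)^{n-1} \;\le\; e^L,
\end{equation*}
which immediately gives $R = 2Le^L/n$. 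For the variance, using the operator inequality $A M A^* \preceq \|M\|_{\mathsf{op}} A A^*$ for PSD $M$ and taking conditional expectation over $X_k$ alone,
\begin{equation*}
\E_{k-1}[D_k D_k^*] \;\preceq\; \frac{\|Q_k\|_{\mathsf{op}}^2}{n^2}\, P_{k-1}\, \E\bigl[(X_k - \mu)(X_k - \mu)^*\bigr]\, P_{k-1}^*,
\end{equation*}
so the same telescoping bound (now squared) together with $\|\E[(X_k-\mu)(X_k-\mu)^*]\|_{\mathsf{op}} \le 4L^2$ yields $\|\E_{k-1}[D_k D_k^*]\|_{\mathsf{op}} \le 4L^2 e^{2L}/n^2$. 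Summing over $k$ and handling $D_k^* D_k$ symmetrically gives $w = O(L^2 e^{2L}/n)$.

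Plugging $R$ and $w$ into the Matrix Freedman inequality produces a tail of the form $2d\exp\!\bigl(-t^2/(2w + \tfrac{2}{3}Rt)\bigr)$. In the regime $t \le Le^L\sqrt{\log d/n}$, the hypothesis $n \gtrsim \log d$ makes $Rt \lesssim w$, so the denominator collapses to $O(w) = O(L^2 e^{2L}/n)$ and the claimed inequality $2d\exp(-cnt^2/L^2 e^{2L})$ follows. The main technical point is the sharp quadratic variation bound; without the telescoping identity $\|P_{k-1}\|_{\mathsf{op}}\|Q_k\|_{\mathsf{op}} \le e^L$, bounding the two factors separately by $e^L$ each would produce $e^{4L}$ in the variance and a correspondingly weaker final inequality, so recognizing that the two halves of the product ``share'' a single $e^L$ budget is the key observation driving optimality.
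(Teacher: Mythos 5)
Your proposal is correct and follows essentially the same route as the paper: the identical Doob martingale with increments $P_{k-1}\,\frac{X_k-\mu}{n}\,Q_k$, the same bounds $R = 2Le^L/n$ on the increments and $O(L^2e^{2L}/n)$ on the predictable quadratic variation via $(1+L/n)^{n-1}\le e^L$, and the same final application of the Matrix Freedman inequality. If anything, you are slightly more careful than the paper in explicitly observing that the martingale is centered at $\E[f]=(I+\mu/n)^n$ rather than at $e^\mu$ and absorbing the resulting $O(L^2e^L/n)$ bias into the final bound.
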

The key difference in this result and the matrix Bernstein inequality for sums is the $L^2e^{2L}$ factor instead of $L^2$. We will later show that even for the special case of products of scalars, such an $e^{O(L)}$ dependence is necessary if the bound is written only in terms of $L$ and not $\mu$. 
\begin{remark}[Independent Work]
The recently posted independent work \cite{newprod} gives a different proof of a more refined version of Theorem \ref{thm:mainthm}, which has slightly better constants and an $L^2e^{2\mu}$ term in the denominator rather than $L^2e^{2L}$ (see their Theorem I). Their approach is also martingale-based, but instead of Matrix Freedman it relies on certain smoothness properties of Schatten norms, also yielding more general results for Schatten norms of matrix products which our proof does not yield.
\end{remark}
\section{Matrix Concentration via Doob Martingale}
Our concentration proof proceeds by constructing a Doob martingale and controlling the norm of each increment and the total predictable variation of the martingale process. Let $$Y_k = \E[f(X_1, ..., X_n) | X_1, ..., X_k] - \E[f(X_1, ..., X_n) | X_1, ..., X_{k-1}],$$ where $f(X_1, ..., X_n) =\prod\limits_{i=1}^{n}\big(I +\frac{X_i}{n}\big)$. Note that $\E[Y_i |  X_1, ..., X_i] = 0$, thus $Y_i$ is a martingale. We also observe that as $X_1, ..., X_n$ are independent, \begin{align*}Y_k &= \E\left[f(X_1, ..., X_n) | X_1, ..., X_k\right] - \E\left[f(X_1, ..., X_n) | X_1, ..., X_{k-1}\right] \\
&=  \prod_{i=1}^k \big(I+\frac{X_i}{n}\big) \prod\limits_{i=k+1}^{n}\E\bigg[\big(I +\frac{X_i}{n}\big)\big] - \prod_{i=1}^{k-1} \big(I + \frac{X_i}{n}\big) \prod\limits_{i=k+1}^{n}\E\bigg[\big(I +\frac{X_i}{n}\big)\bigg] \\ 
&= \prod_{i=1}^{k-1} \big(I+\frac{X_i}{n}\big) \frac{X_k - \mu
}{n}\prod\limits_{i=k+1}^{n}\big(I +\frac{\mu}{n}\big).
\end{align*}  
We thus use submultiplicativity of the spectral norm to obtain,
\begin{align*}
    \|Y_k\|&=\bigg\|\prod_{i=1}^{k-1} \big(I+\frac{X_i}{n}\big) \cdot \frac{X_k - \mu
}{n}\cdot\prod\limits_{i=k+1}^{n}\big(I +\frac{\mu}{n}\big)\bigg\|\\
&\leq \bigg(\prod\limits_{i=1}^{k-1}\bigg\|I+\frac{X_i}{n}\bigg\|\bigg)\bigg\|\frac{X_k-\mu}{n}\bigg\|\bigg(\prod\limits_{i=k+1}^{n}\bigg\|\bigg(I+\frac{\mu}{n}\bigg)\bigg\|\bigg)\\
&\leq \frac{2L}{n}\big(1+\frac{L}{n}\big)^{n-1}\\
&\leq \frac{2Le^L}{n},
\end{align*}
where the second inequality follows from the norms of $X_i$ (and hence norm of $\mu$) being bounded by $L$ almost surely and the last inequality follows as $(1+x/n)^{(n-1)}\leq (1+x/n)^n\leq e^x$ for non-negative $x$.

Also note that \begin{align*}\left\|\mathbb{E}\left[Y_k Y_k^*| X_1,\ldots,X_{k-1}\right]\right\| &= \left\|\left(\prod_{i=1}^{k-1} I+\frac{X_i}{n}\right) \frac{X_k - \mu
}{n}\prod\limits_{i=k+1}^{n}\left(I +\frac{\mu}{n}\right)\prod\limits_{i=n}^{k+1}\left(I +\frac{\mu}{n}\right)\frac{X_k^* - \mu
}{n}\left(\prod_{i=k-1}^{1} I+\frac{X_i^*}{n}\right)\right\| \\
&≤ \prod_{i=1}^{k-1} \left\|I+\frac{X_i}{n}\right\| \cdot \left\|\frac{X_k - \mu
}{n}\right\| \prod\limits_{i=k+1}^{n}\left\|I +\frac{\mu}{n}\right\| \prod\limits_{i=n}^{k+1}\left\|I +\frac{\mu}{n}\right\| \cdot \left\|\frac{X_k^* - \mu
}{n}\right\|\prod_{i=k-1}^{1} \left\|I+\frac{X_i^*}{n} \right\|  \\
&≤ \frac{4L^2}{n^2} \left(1+\frac{L}{n}\right)^{2n-2} \\
&≤ \frac{4L^2}{n^2} e^{2L}. \\
\end {align*} 
Hence, we get that for any $k\leq n$,
\begin{align*}
    \left\|\sum\limits_{i=1}^{k}\mathbb{E}\left[Y_k Y_k^*| X_1,\ldots,X_{k-1}\right]\right\|
    &\leq \sum\limits_{i=1}^{k}\left\|\mathbb{E}\left[Y_k Y_k^*| X_1,\ldots,X_{k-1}\right]\right\|\\
    &\leq \frac{4L^2e^{2L}k}{n^2}\\
    &\leq\frac{4L^2e^{2L}}{n}.
\end{align*}
To conclude the proof, we use the Matrix Freedman inequality \cite{TroppIntro15} for concentration of matrix valued martingales which is stated next.
\begin{theorem}\label{thm:freedman}
Suppose $Y_k=\sum\limits_{i=1}^k X_i $ is a martingale with $d\times d$ matrix increments $X_i$ satisfying $\|X_i\|\leq R$ almost surely. Let the predictable variations of the process be $W_k^{(1)}=\sum\limits_{i=1}^{k}\E[X_i X_i^*|X_1,\ldots,X_{i-1}]$ and $W_k^{(2)}=\sum\limits_{i=1}^{k}\E[X_i^* X_i|X_1,\ldots,X_{i-1}]$. Then for all $t\geq 0$, we have
\begin{align*}
    \mathsf{Pr}[\exists k \geq 0 : \|Y_k\|\geq t \ \mathit{and} \   \max\{\|W_k^{(1)}\|,\|W_k^{(2)}\|\}\leq \sigma^2]\leq 2d\exp\left(-\frac{c t^2}{Rt + \sigma^2}\right).
\end{align*}
\end{theorem}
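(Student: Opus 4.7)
The plan is to prove Theorem~\ref{thm:freedman} by the matrix Laplace-transform method, constructing a trace-exponential supermartingale via Lieb's concavity theorem and then applying optional stopping.

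First, I would reduce to the Hermitian case using the Hermitian dilation
$$\mathcal{H}(A) = \begin{pmatrix} 0 & A \\ A^* & 0 \end{pmatrix} \in \mathbb{C}^{2d \times 2d},$$
which is self-adjoint, satisfies $\|\mathcal{H}(A)\| = \|A\|$, and is linear. Replacing each $X_i$ by $\mathcal{H}(X_i)$ yields a Hermitian martingale $\mathcal{H}(Y_k)$ with increments bounded by $R$, and a direct block computation shows $\sum_{i\le k}\mathbb{E}[\mathcal{H}(X_i)^2\mid \mathcal{F}_{i-1}] = \mathrm{diag}(W_k^{(1)},W_k^{(2)})$, whose norm equals $\max\{\|W_k^{(1)}\|,\|W_k^{(2)}\|\}$. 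It therefore suffices to prove a one-sided bound on $\lambda_{\max}(Y_k)$ for a Hermitian martingale with a single predictable variation $W_k = \sum_{i\leq k}\mathbb{E}[X_i^2\mid\mathcal{F}_{i-1}]$; the factor $2$ and the doubled dimension can be absorbed via a union bound applied to $\pm Y_k$ and the constant $c$.

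Next, fix $\theta \in (0, 3/R)$ and consider
$$S_k(\theta) = \mathrm{tr}\exp\bigl(\theta Y_k - \psi_R(\theta)\,W_k\bigr), \qquad \psi_R(\theta) = \frac{\theta^2/2}{1 - R\theta/3}.$$
The key step will be to show that $(S_k(\theta))$ is a supermartingale with $S_0(\theta) = d$. Writing $H = \theta Y_{k-1} - \psi_R(\theta) W_k$, which is $\mathcal{F}_{k-1}$-measurable since $W_k$ involves only conditional expectations through time $k$, Lieb's concavity theorem (concavity of $A\mapsto \mathrm{tr}\exp(H+\log A)$ on the positive-definite cone) together with Jensen's inequality gives
$$\mathbb{E}[\mathrm{tr}\exp(H + \theta X_k)\mid \mathcal{F}_{k-1}] \leq \mathrm{tr}\exp\bigl(H + \log \mathbb{E}[\exp(\theta X_k)\mid \mathcal{F}_{k-1}]\bigr).$$
Combining this with the matrix Bernstein cumulant bound $\log \mathbb{E}[\exp(\theta X_k)\mid \mathcal{F}_{k-1}] \preceq \psi_R(\theta)\mathbb{E}[X_k^2\mid\mathcal{F}_{k-1}]$, lifted from the scalar Bernstein inequality via the spectral functional calculus, and monotonicity of $A\mapsto \mathrm{tr}\exp(H+A)$ in the semidefinite order (a consequence of Weyl's eigenvalue inequality), yields $\mathbb{E}[S_k(\theta)\mid\mathcal{F}_{k-1}] \leq S_{k-1}(\theta)$.

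Finally, I would convert this supermartingale bound into the uniform-in-$k$ tail inequality by optional stopping. Define $\tau = \inf\{k\geq 0 : \lambda_{\max}(Y_k) \geq t \text{ and } \|W_k\|\leq \sigma^2\}$; since $W_k \in \mathcal{F}_{k-1}$ and $Y_k\in\mathcal{F}_k$, $\tau$ is a stopping time. On $\{\tau<\infty\}$ we have $\lambda_{\max}(\theta Y_\tau - \psi_R(\theta) W_\tau) \geq \theta t - \psi_R(\theta)\sigma^2$, so $S_\tau(\theta) \geq e^{\theta t - \psi_R(\theta)\sigma^2}$; the optional-stopping inequality applied to $S_{k\wedge\tau}(\theta)$, with Fatou's lemma as $k\to\infty$, gives
$$e^{\theta t - \psi_R(\theta)\sigma^2}\,\Pr[\tau < \infty] \leq d.$$
Optimizing at $\theta = t/(\sigma^2 + Rt/3)$ yields the Bernstein form $d\exp(-t^2/(2\sigma^2 + 2Rt/3))$, producing the stated bound after accounting for the dilation factor. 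The principal obstacle is the supermartingale step, which rests essentially on Lieb's concavity theorem; once Lieb is granted, the transfer of scalar Bernstein cumulant bounds to the matrix case is routine, and the rest is a standard Laplace-transform computation.
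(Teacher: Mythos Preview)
The paper does not prove Theorem~\ref{thm:freedman} at all; it merely quotes the Matrix Freedman inequality from Tropp~\cite{TroppIntro15} as a black box and then applies it to the Doob martingale constructed in Section~2. Your proposal is therefore not being compared against an in-paper argument but against the original source.

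That said, your sketch is correct and is essentially Tropp's own proof: Hermitian dilation to reduce to the self-adjoint case, the trace-exponential supermartingale built from Lieb's concavity theorem together with the Bernstein cumulant bound $\log\E[e^{\theta X}\mid\mathcal F_{k-1}]\preceq \psi_R(\theta)\,\E[X^2\mid\mathcal F_{k-1}]$, and optional stopping plus Markov to extract the uniform-in-$k$ tail. The only places one might want a word more of care are (i) the cumulant bound, which is usually first established in the form $\E[e^{\theta X}\mid\mathcal F_{k-1}]\preceq I+\psi_R(\theta)\,\E[X^2\mid\mathcal F_{k-1}]$ via a scalar inequality applied spectrally, and then passed through the operator-monotone logarithm; and (ii) the stopping-time step, where one should note that $W_k$ is $\mathcal F_{k-1}$-measurable so that $\{\tau=k\}\in\mathcal F_k$. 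Both are standard and you have flagged them. In short: your proof is fine, and it simply supplies what the paper chose to import by citation.
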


\begin{proof}[Proof of Theorem \ref{thm:mainthm}]
From the above argument, we get that the increments of our martingale $Y_k$ are bounded by $Le^L/n$ in spectral norm almost surely and that the norm of the predictable quadratic variation (the analysis of $\mathbb{E}[Y_k^* Y_k| X_1,\ldots,X_{k-1}]$ is identical) is bounded by $\frac{4L^2e^{2L}}{n}$ almost surely. Hence we can use Thereom \ref{thm:freedman}, to conclude that

\begin{align*}
     \mathsf{Pr}\left[\|Y_n\|\geq t \right]&\leq 2d\exp\left(-\frac{cnt^2}{Le^Lt + L^2e^{2L}}\right)\\
     &\leq 2d\exp\left(-\frac{cnt^2}{2L^2e^{2L}}\right),
\end{align*}
where for the second inequality we have assumed that $t\leq Le^L\sqrt{\frac{\log d}{n}}\leq Le^L.$

\end{proof}

\section{Lower Bound}
In this section, we show that the tail bound needs to depend as $L^2e^{O(L)}$ as given in Theorem \ref{thm:mainthm} even for the case of scalars rather than matrices. Consider a two-point distribution which takes values $X_i = 0$ or $X_i = 2L$ with equal probability. $X_i$ can thus be represented as $X_i = L+LY_i$ where $Y_i$ is a Rademacher random variable. Thus $\mathbb{E}[X] = L$. For sufficiently large $n$, $\prod\limits_{i=1}^{n}\left(1+\frac{X_i}{n}\right)= exp\left(\sum\limits_{i=1}^{n}\frac{X_i}{n}\right)(1+o_n(1))$. Taking $t=Le^Lc$, we have:
\begin{align*}
    \mathsf{Pr}\left[exp\left(\sum\limits_{i=1}^{n}\frac{L+LY_i}{n}\right)-e^L\geq cLe^L\right]&=\mathsf{Pr}\left[exp\left(\sum\limits_{i=1}^{n}\frac{LY_i}{n}\right)-1\geq cL\right]\\
    &=\mathsf{Pr}\left[\sum\limits_{i=1}^{n}\frac{LY_i}{n}\geq \log(1+cL)\right]\\
    &\geq \mathsf{Pr}\left[\sum\limits_{i=1}^{n}\frac{LY_i}{n}\geq cL\right]\\
    &\geq \mathsf{Pr}\left[\sum\limits_{i=1}^{n}\frac{Y_i}{n}\geq c\right],
\end{align*}
where the first inequality follows as $\log(1+x) < x$ for sufficiently large $x$ and hence corresponds to a larger probability event. Hence, we obtain a lower bound on the probability which is independent of $L$ and so indeed the $Le^{O(L)}$ term must appear in the tail bound. Here we have $O(L)$ in the exponent because in the lower bound example, the $X_i$ are bounded by $2L$ rather than $L$.

\addreferencesection
\bibliographystyle{amsalpha}
\bibliography{references}

\end{document}